\def\r{\mathbb R}
 \def\s{\mathbb S}
\newtheorem{theorem}{Theorem}[section]
\newtheorem{proposition}[theorem]{Proposition}
\newtheorem{definition}[theorem]{Definition}
\title{Ruled surfaces of generalized self-similar solutions of the mean curvature flow}
\author{Rafael L\'opez\footnote{Partially supported by the grant no. MTM2017-89677-P, MINECO/AEI/FEDER, UE}\\
 Departamento de Geometr\'{\i}a y Topolog\'{\i}a \\
 Universidad de Granada\\
 18071 Granada, Spain\\
\texttt{rcamino@ugr.es}}
\date{}
\begin{document}
\maketitle

\begin{abstract} In Euclidean space, we investigate surfaces whose mean curvature $H$ satisfies the equation $H=\alpha\langle N,\mathbf{x}\rangle+\lambda$, where $N$ is the Gauss map, $\mathbf{x}$ is the position vector and $\alpha$ and $\lambda$ are two constants. There surfaces generalize self-shrinkers and self-expanders of the mean curvature flow. We classify the ruled surfaces and the translation surfaces, proving that they are cylindrical surfaces. 
\end{abstract}

\noindent {\it Keywords:} mean curvature flow, self-similar solution, ruled surface, separation of variables \\
{\it AMS Subject Classification:} 53C42,  53C44
\section{Introduction and statement of the results}

In Euclidean space $\r^3$, the theory of self-shrinkers, and to a lesser extent also  expander-shrinkers, has developed a great interest in the last decades. Self-shrinkers  are surfaces $M$ characterized by the equation
\begin{equation}\label{sh}
H(\mathbf{x})=-\frac{1}{2}\langle N(\mathbf{x}),\mathbf{x}\rangle,\quad \mathbf{x}\in M,
\end{equation}
where $N$ is the Gauss map of $M$ and $\langle,\rangle$ is the Euclidean metric of $\r^3$. Here $H$ is the trace of the second fundamental form so the mean curvature of  a sphere of radius $r>0$ is $2/r$ with respect to the inward normal. Analogously, self-expanders   satisfy \eqref{sh} but replacing the factor $-1/2$ by $1/2$. Self-shrinkers play an important role in the study of   the mean curvature flow because  they correspond to rescaling solutions   of an early time slice. Moreover,  self-shrinkers provide information about the behaviour of the  singularities of the  flow. The literature in the topic of self-shrinkers is sufficiently large to   give a summary. We address the reader to \cite{cm,lee,il}  and references therein as a first approach.  

There are very few explicit examples of self-shrinkers. First  examples  are vector planes, the sphere  of radius $2$ centered at the origin and the   round cylinder of radius $\sqrt{2}$ whose axis passes through  the origin.  Other examples appear when one assumes some type of invariance of the ambient space.  A first family of surfaces are those one that  are  invariant by a uniparametric group of translations. In such a case,  the equation \eqref{sh} reduces in an ordinary differential equation that describes the curvature of the generating planar curve (\cite{al,aaa,ha,km}. A second type or surfaces are the helicoidal surfaces, including rotational surfaces.  Rotational and  helicoidal   shrinkers were studied in   \cite{ha2,km}.

 Self-shrinkers   can be also seen as weighted minimal surfaces in the context of manifolds with density: see \cite{gr,mo}.   Let $e^\varphi$ be a   positive density  in $\r^3$, where $\varphi$ is a smooth function in $\r^3$. We use the density $e^\varphi$  as a weight for  the surface and the volume area. Let $M$ be a surface and  let  $\Phi:(-\epsilon,\epsilon)\times M\rightarrow\r^3$ be a compactly supported variation of $M$ with $\Phi(0,-)=M$. Denote by      $A_\varphi(t)$ and $V_\varphi(t)$    the weighted area  and the enclosed weighted  volume  of $\Phi(\{t\}\times M)$, respectively. The formulae of the first variation of $A_\varphi(t)$ and $V_\varphi(t)$ are  
$$A'_\varphi(0)=-\int_M H_\varphi \langle N,\xi\rangle\  dA_\varphi,\quad V_\varphi'(0)=\int_M \langle N,\xi\rangle\ dA_\varphi,$$
where $\xi$ is the variational vector field of $\Phi$ and 
$$H_\varphi=H-\langle N,\nabla\varphi \rangle$$
 is called the {\it weighted mean curvature}.    Consequently,  $M$ is a critical point of the functional area $A_\varphi$ if and only if $H_\varphi=0$. If we choose  the function $\varphi$ as 
\begin{equation}\label{fi}
\varphi(\mathbf{x})=\alpha \frac{|\mathbf{x}^2|}{2},\quad \mathbf{x}\in\r^3,
\end{equation}
the expression of $H_\varphi$ is   $H_\varphi=H(\mathbf{x})-\alpha\langle N,\mathbf{x}\rangle$. In particular,    self-shrinkers
are     critical points of the weighted area functional $A_\varphi$ for $\alpha=-1/2$. In case that we seek   critical poins of $A_\varphi$ for arbitrary  variations preserving the weighted volume, we deduce that  the function   $H_\varphi$ is   constant. After this motivation, and for the function $\varphi$ given in \eqref{fi}, we generalize the notion of self-shrinkers.

\begin{definition} Let $\alpha,\lambda\in\r$. A surface $M$ in $\r^3$ is said to be an $\alpha$-self-similar solution of constant $\lambda$ if 
\begin{equation}\label{ll}
H(\mathbf{x})=\alpha\langle N(\mathbf{x}),\mathbf{x}\rangle+\lambda,\quad \mathbf{x}\in M.
\end{equation}
\end{definition}
The case $\alpha=0$ corresponds with the surfaces  of constant mean curvature. This situation will be discarded in this paper and we will assume $\alpha\not=0$.    Examples of solutions of equation \eqref{ll} are  again   spheres centered at the origin and round cylinders whose axis passes through   the origin, but now, and in both cases,  the radius is arbitrary. Also  affine planes are   solutions of \eqref{ll}.

When $\alpha=-1/2$ in equation \eqref{ll}, self-shrinkers of constant $\lambda$ were  studied independently by  Cheng and Wei (\cite{cw}) and  McGonagle and   Ross (\cite{mgr}). Since then, and if $\alpha=-1/2$, these surfaces have received the interest for geometers:   \cite{ch,cow,cw0,gu,ross,zfc}.
 
 Let us point out that   the equation \eqref{ll} is invariant by linear  isometries of $\r^3$. So  if $A:\r^3\rightarrow\r^3$ is a linear isometry and $M$ is an $\alpha$-self-similar solution of constant $\lambda$, then $A(M)$ satisfies \eqref{ll} with the same constants $\alpha$ and $\lambda$.  
 We also notice  that a surface can be a solution of \eqref{ll} for different values of $\alpha$ and $\lambda$. For example, the sphere of radius $2$ centered at the origin satisfies \eqref{ll} for $(\alpha,\lambda)=(-1/2,0)$ and $(\alpha,\lambda)=(1/2,2)$.

   In this paper we investigate   $\alpha$-self-similar solutions of constant $\lambda$ under the geometric assumption that $M$ is a ruled surface. A {\it ruled surface} is a surface that is the union of a one-parameter family of straight lines.   A ruled surface  can be parametrized locally by 
\begin{equation}\label{ru}
X(s,t)=\gamma(s)+t\beta(s),
\end{equation}
where $t\in\r$ and $\gamma,\beta:I\subset\r\rightarrow\r^3$ are smooth curves with $|\beta(s)|=1$ for all $s\in I$. The curve $\gamma(s)$ is called the directrix of the surface and a line having $\beta(s)$ as direction vector is called a ruling of the surface.   In case that  $\gamma$ reduces into  a point,    the surface is called  conical.   On the other hand, if  the rulings are all parallel to a fixed direction ($\beta(s)$ is constant), the surface  is called   {\it cylindrical}. It is clear that a ruled surface is cylindrical if and only if it is invariant by a uniparametric  group of translations, namely, along the direction of $\beta$. 

In this paper, we classify all ruled surfaces that are solutions of the $\alpha$-self-similar equation \eqref{ll}.

\begin{theorem}\label{t1}
Let $M$ be an $\alpha$-self-similar solution of constant $\lambda$. If $M$ is   a ruled surface, then $M$ is a cylindrical surface.
\end{theorem}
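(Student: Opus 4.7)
The plan is to parametrize a putative non-cylindrical ruled solution locally by $X(s,t)=\gamma(s)+t\beta(s)$ with $|\beta|=1$, assume $\beta'(s)\neq 0$ on an open interval $I$, and show that the surface must already lie in a plane through the origin, which is cylindrical in the sense of the definition. After reparametrizing so that $\beta$ has unit speed as a curve on $\s^{2}$, one introduces the Darboux frame $\{\beta,\beta',m\}$ with $m=\beta\times\beta'$ and $\beta''=-\beta+\kappa m$, where $\kappa=\kappa(s)$ is the geodesic curvature of $\beta$ on the sphere.

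A direct computation of the first and second fundamental forms yields $\langle N,\mathbf{x}\rangle=R(t)/W$ and $H=Q(t)/(2W^{3})$, where $W=\sqrt{EG-F^{2}}$ and $R(t)$, $Q(t)$ are polynomials in $t$ of degree at most $1$ and $2$ respectively, with coefficients depending only on $s$. Substituting into \eqref{ll} produces the identity
\[Q(t)=2\alpha R(t)\,W^{2}+2\lambda W^{3}.\]
The first key step will be an asymptotic analysis as $t\to\pm\infty$: since $W\sim|t|$ and the polynomials $Q$, $R$ have degrees $2$ and $1$, one gets $H\to 0$ while $\langle N,\mathbf{x}\rangle\to\pm\sigma$, where $\sigma(s):=\det(\beta',\beta,\gamma)$. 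Adding and subtracting the two resulting equalities forces $\lambda=0$ and $\sigma\equiv 0$. The vanishing of $\sigma$ means $\gamma(s)\in\mathrm{span}(\beta(s),\beta'(s))$ pointwise, and by absorbing the $\beta$-component of $\gamma$ into the parameter $t$ one can normalize $\gamma(s)=q(s)\beta'(s)$.

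With these reductions the equation becomes the polynomial identity $Q(t)=2\alpha q^{2}\kappa\,W^{2}$ in $t$. The coefficient of $t^{2}$ gives $\kappa\bigl(1+2\alpha q^{2}\bigr)=0$. If $\kappa\equiv 0$ on $I$ then $\beta''=-\beta$, so $\beta$ is a great circle, and both $\beta$ and $\beta'$ lie in a fixed $2$-plane through the origin; hence the surface $q\beta'+t\beta$ is contained in that plane, which is cylindrical. Otherwise $q^{2}=-1/(2\alpha)$ on $I$, forcing $\alpha<0$ and $q$ constant; the $t^{1}$-coefficient equation then reduces to $\kappa'=0$, and with $q'=\kappa'=0$ the $t^{0}$-coefficient equation collapses to $1-\kappa^{2}=-\kappa^{2}$, a contradiction.

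The main obstacle is the irrational term $2\lambda W^{3}$ in the identity, which prevents a direct coefficient-by-coefficient comparison; the asymptotic argument at $t\to\pm\infty$ removes $\lambda$ and the leading coefficient of $R$ at once, after which the problem becomes polynomial in $t$ and the rigidity $\gamma\in\mathrm{span}(\beta,\beta')$ supplies enough ODE-type relations between $q$ and $\kappa$ to force $\kappa\equiv 0$.
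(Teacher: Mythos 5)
Your reduction to a polynomial identity in $t$ and the endgame (either $\kappa\equiv 0$, giving a plane, or $q$ constant leading to a contradiction) parallels the paper's $\lambda=0$ case, but the step you use to dispose of $\lambda$ has a genuine gap. You let $t\to+\infty$ and $t\to-\infty$ in the \emph{same} equation $H=\alpha\langle N,\mathbf{x}\rangle+\lambda$ and add/subtract the two limits. This is not justified: equation \eqref{ll} is not invariant under $N\mapsto -N$ when $\lambda\neq 0$ (it becomes $H=\alpha\langle N,\mathbf{x}\rangle-\lambda$), and the normal you are using is the one induced by the parametrization, $N=(X_s\times X_t)/|X_s\times X_t|$, which reverses sign along a ruling at any zero of $X_s\times X_t=\gamma'\times\beta-t\,e_3$. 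Such a zero occurs on every complete ruling of every \emph{planar} ruled surface with $\beta'\neq 0$ (the distribution parameter vanishes there), so the two one-sided limits satisfy the equation with opposite signs of $\lambda$, and all you can conclude is $\alpha\sigma+\lambda=0$, not $\lambda=0$ and $\sigma\equiv 0$. Concretely, a plane at distance $d\neq 0$ from the origin is a ruled $\alpha$-self-similar solution with $\lambda=-\alpha d\neq 0$; parametrize it conically with $\gamma\equiv p_0$ and rotating $\beta$: then $\sigma=\det(\beta',\beta,p_0)=\mp d\neq 0$, yet your argument would force $\lambda=0$ and $\sigma\equiv 0$. (A second, related issue: if one takes the local notion of ruled surface, the rulings need not be complete lines and the limits $t\to\pm\infty$ are not available at all, whereas coefficient comparison of a polynomial identity works on any open $t$-interval.)

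So as written your proof only covers $\lambda=0$ (where it is essentially the paper's Case 1; the factor in $\kappa(1+2\alpha q^2)=0$ versus the paper's $(1+\alpha v^2)\Theta=0$ looks like a mean-curvature normalization issue, not a substantive one). The case $\lambda\neq 0$ requires separate work: the paper isolates the irrational term, squares to obtain a degree-$6$ polynomial identity in $t$ valid on the parameter interval, and matches the coefficients of $t^6$, $t^5$, $t^4$, $t^0$; the $t^6$ coefficient gives only $\lambda^2=\alpha^2\langle e_3,\gamma\rangle^2$ (consistent with the planes above), and the remaining coefficients force $\Theta=0$ and $\langle e_3,\gamma\rangle=\lambda/\alpha$, i.e.\ a plane parallel to the great-circle plane of $\beta$ at distance $|\lambda/\alpha|$ from the origin --- still cylindrical, which is how the theorem survives. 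To repair your argument you must either prove that a consistent orientation holds along complete rulings (which fails exactly for these planes) or replace the asymptotic step by the squaring/coefficient-matching argument.
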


This result was proved in \cite{an} for self-shrinkers. Cylindrical surfaces with  $\alpha=-1/2$ and $\lambda\not=0$ were classified in \cite{ch}. The key in the proof of Theorem \ref{t1} is that, by means of the parametrization \eqref{ru},    equation \eqref{ll}  is a polynomial equation on the variable $t$, whose coefficients are functions on the variable $s$. Thus all these coefficients must vanish and from here, we will prove the result. The proof of Theorem \ref{t1} will be carried out in Section \ref{sec2}.

Our second result refers to the study of the solutions \eqref{ll} by the method of separation of variables. We stand for $(x,y,z)$ the canonical coordinates of $\r^3$. Let  $M$ be    a graph $z=u(x,y)$, where $u$ is a function defined in some domain $\r^2$.     If $M$ is an $\alpha$-self-similar solution of constant $\lambda$, then $u$ is a solution of  
\begin{equation}\label{sh4}
\mbox{div}\frac{Du}{\sqrt{1+|Du|^2}}=\alpha\,\frac{u-\langle (x,y),Du\rangle}{\sqrt{1+|Du|^2}}+\lambda.
\end{equation}
This equation is a quasilinear elliptic equation and, as one can expect from the minimal surfaces theory,   that it is hard to find explicit solutions of \eqref{sh4}. A first approach to solve this equation is  by means of the method of separation of variables. The idea is to  replace a   function $u(x,y)$ by  a function that is   the sum of two   functions, each one depending in one  variable. Thus, we consider $u(x,y)= f(x)+g(y)$, where  $f:I\subset\r\rightarrow\r$ and $g:J\subset\r\rightarrow\r$ are smooth functions.  In such a case, we prove the following result.

\begin{theorem} \label{t2}
If $z=f(x)+g(y)$ is an $\alpha$-self-similar solution of constant $\lambda$, then $f$ or $g$ is a linear function. In particular, the surface is   cylindrical. Moreover, and after a linear isometry of $\r^3$, we have $g(y)=0$ and $f(x)$ satisfies the equation
\begin{equation}\label{cee}
\frac{f''(x)}{(1+f'(x)^2)^{3/2}}=\alpha\frac{-xf'(x)+f(x)}{\sqrt{1+f'(x)^2}}+\lambda.
\end{equation}
\end{theorem}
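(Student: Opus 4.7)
The plan is to substitute the ansatz $u(x,y)=f(x)+g(y)$ into \eqref{sh4} and multiply by $W^{3}$, where $W=\sqrt{1+f'(x)^{2}+g'(y)^{2}}$, to reduce the problem to the polynomial identity
\begin{equation*}
P(x,y):=f''(1+g'^{2})+g''(1+f'^{2})-\alpha(1+f'^{2}+g'^{2})(f+g-xf'-yg')-\lambda(1+f'^{2}+g'^{2})^{3/2}=0.
\end{equation*}
The only coupling between the two variables lives in the factors $1+f'^{2}+g'^{2}$ and $W$, so the core strategy is to kill this coupling by mixed partial differentiation. A direct computation of $\partial_{x}\partial_{y}P=0$ wipes out every term depending on a single variable and yields
\begin{equation*}
g'g''\,(f'''+\alpha xf'')+f'f''\,(g'''+\alpha yg'')=\frac{3\lambda}{2W}\,f'f''g'g''.
\end{equation*}

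I argue by contradiction, assuming that neither $f$ nor $g$ is affine: smoothness then provides open intervals $I$ and $J$ on which $f'f''$ and $g'g''$ are nonvanishing, respectively, so that on $I\times J$ dividing the previous identity by $f'f''g'g''$ gives
\begin{equation*}
\frac{f'''+\alpha xf''}{f'f''}+\frac{g'''+\alpha yg''}{g'g''}=\frac{3\lambda}{2W(x,y)}.
\end{equation*}
If $\lambda\neq 0$, the conclusion is immediate: the left-hand side is a sum of a function of $x$ alone and a function of $y$ alone, so applying $\partial_{x}\partial_{y}$ annihilates it, whereas a direct calculation gives $\partial_{x}\partial_{y}(1/W)=3f'f''g'g''/W^{5}$, which contradicts $\lambda f'f''g'g''\neq 0$ on $I\times J$.

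The delicate subcase is $\lambda=0$. Now the separation only yields a single constant $c$ with $f'''+\alpha xf''=cf'f''$, which integrates to $f''=K_{1}\exp(cf-\alpha x^{2}/2)$ (and the analogous relation for $g$). To squeeze out more information I would return to $\partial_{x}P=0$ with $\lambda=0$, substitute $f'''/f''=cf'-\alpha x$ and simplify; after cancellations the equation takes the separated form $[-2\alpha f(x)+3\alpha xf'(x)]+\Theta(y)=0$ with $\Theta$ depending on $y$ alone, forcing the bracket to be a constant. The resulting linear ODE $3xf'-2f=\mathrm{const}$ has general solution $f(x)=A+Cx^{2/3}$; feeding this back into $f'''+\alpha xf''=cf'f''$ produces three terms in the incommensurate powers $x^{-7/3}$, $x^{-5/3}$, $x^{-1/3}$, which forces $C=0$ and hence $f$ constant, contradicting the assumed non-linearity of $f$.

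Once one of $f,g$, say $g$, is shown to be affine, $g(y)=ay+b$, the surface $z=f(x)+ay+b$ is invariant under translations along $(0,1,a)$; a rotation about the $x$-axis aligning that direction with the $y$-axis turns the graph into $z=\tilde f(x)$, so $\tilde g\equiv 0$ in the new coordinates, and \eqref{sh4} reduces at once to \eqref{cee}. I expect the main obstacle to be precisely the $\lambda=0$ subcase, where the mixed-partial identity alone is too weak and must be coupled with $\partial_{x}P=0$ and with a power-versus-exponential comparison between the two resulting ODEs for $f$.
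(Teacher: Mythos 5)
Your proposal is correct and follows essentially the same route as the paper: differentiate the translation-surface equation once in $x$ and once in $y$, kill the case $\lambda\neq 0$ by a further mixed differentiation of the separated identity, and in the case $\lambda=0$ separate variables, deduce that $3xf'-2f$ is constant, hence $f=A+Cx^{2/3}$, and reach a contradiction by back-substitution into an earlier equation. The only difference is bookkeeping in the $\lambda=0$ subcase — you substitute $f'''=(cf'-\alpha x)f''$ into $\partial_x P=0$ and finish with the third-order ODE, whereas the paper first integrates the separated ODEs (its \eqref{f1}, \eqref{f2}) and substitutes into \eqref{eq2} — but both arrive at the same $x^{2/3}$ contradiction, so this is a streamlining rather than a different method.
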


The proof of this result will done in Section \ref{sec3}. Since the function $u(x,y)$ is the sum of two functions of one variable,   equation \eqref{sh4} leaves to be a partial differential equation and converts into an ordinary differential equation where appears the derivatives of the functions $f$ and $g$. Then we will successfully  solve completely the solutions of equation \eqref{sh4}.

\section{Classification of ruled surfaces}\label{sec2}

 In this section we prove Theorem \ref{t1}. The proof   consists to assume that the ruled surface is parametrized by \eqref{ru} and that the rulings are not parallel. In such a case,  we shall prove that  an $\alpha$-self-similar solution of constant $\lambda$ must be a plane,  which it is a cylindrical surface. Let us observe that a plane is a ruled surface and that can be parametrized by \eqref{ru} but being $\beta$  a non-constant curve. 
 
 On the other hand, the cylindrical surfaces that satisfy \eqref{ll} are the one-dimensional version of the $\alpha$-self-similar solutions. Indeed, after a linear isometry of the ambient space,   we assume that the rulings are parallel to the $y$-line. We parametrize the surface as  $X(s,t)=\gamma(s)+t(0,-1,0)$, where $\gamma$ is a curve contained in the $xz$-plane $\Pi$ parametrized by arc-length. Then \eqref{ll} is 
 \begin{equation}\label{kk}
 \kappa_\gamma(s)=\alpha\langle\mathbf{n}(s),\gamma(s)\rangle+\lambda,
 \end{equation}
 where $\kappa_\gamma$ is the curvature of $\gamma$ as planar curve in $\Pi$ and $\{\gamma'(s),\mathbf{n}\}$ is a positive orthonormal frame in $\Pi$ for all $s\in I$.
 
Consider a ruled surface parametrized by  $X(s,t)=\gamma(s)+t\beta(s)$ as in \eqref{ru}, $|\beta(s)|=1$,  and suppose that $\beta$ is a not constant curve.   Since $|\beta(s)|=1$, the curve $\beta$ is a curve in the unit sphere $\s^2=\{\mathbf{x}:|\mathbf{x}|=1\}$.  Without loss of generality, we  assume that   $\beta$ is parametrized by arc-length, $|\beta'(s)|=1$ for all $s\in I$. From now, we drop the dependence of the variable of the functions.  In particular, $\mathcal{B}=\{\beta,\beta',e_3:=\beta\times\beta'\}$ is an orthonormal basis of $\r^3$ and  
  \begin{equation}\label{bbb}
\begin{split}
\beta''&=-\beta+\Theta\, e_3 ,\quad \Theta=(\beta,\beta',\beta'').\\
e_3'&=-\Theta \beta'.
\end{split}
\end{equation}
Here we stands for $(u,v,w)$ the determinant of the vectors $u,v,w\in\r^3$. 
 
 Firstly, we need to obtain an expression of  equation \eqref{ll} for the   parametrization   $X(s,t)$. We denote with the subscripts $s$ and $t$ the derivatives of a function with respect to the variables $s$ and $t$. Let us notice that $X_t=\beta$ and $X_{tt}=0$. The coefficients of the first fundamental form with respect to $X$ are $E=|X_s|^2$, $F=\langle X_s,X_t\rangle$ and $G=|X_t|^2=1$. Set $W=EG-F^2$. Consider the unit normal vector field $N=(X_s\times X_t)/\sqrt{W}$. Then equation \eqref{ll} is
\begin{equation}\label{sh3}
(X_s,X_t,X_{ss})-2fF(X_s,X_t,X_{st})=\alpha W ( X,X_{s}, X_t) +\lambda\, W^{3/2}.
\end{equation}

 A first case to discuss is when $X(s,t)$ is a conical surface.

\begin{proposition} \label{pr1}
Planes are the only  conical surfaces that are   $\alpha$-self-similar of constant $\lambda$.  
\end{proposition}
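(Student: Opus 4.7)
The plan is to write a conical surface as $X(s,t) = p + t\beta(s)$, where $p\in\r^3$ is the apex, $\beta:I\to\s^2$ is a non-constant curve which, after reparametrization, satisfies $|\beta'(s)|=1$, and then to plug this into equation \eqref{sh3} (equivalently, into \eqref{ll}) and exploit the separation of the variables $s$ and $t$ forced by the form of the parametrization.

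First I compute the auxiliary data: $X_s=t\beta'$, $X_t=\beta$, $X_{ss}=t\beta''=t(-\beta+\Theta e_3)$ by \eqref{bbb}, $X_{st}=\beta'$ and $X_{tt}=0$; the first fundamental form coefficients are $E=t^2$, $F=\langle\beta',\beta\rangle=0$, $G=1$, so $W=t^2$ and the unit normal is $N=(X_s\times X_t)/\sqrt W=-\mathrm{sign}(t)\,e_3$. Substituting into \eqref{ll}, the mean curvature collapses to $H=-\Theta(s)/t$ (only the $\ell$-entry of the second fundamental form survives) and the support function reduces to $\langle N,X\rangle=-\mathrm{sign}(t)\langle e_3(s),p\rangle$, because $\langle e_3,\beta\rangle=0$ kills the $t\beta$ contribution. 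The $\alpha$-self-similar equation for $t>0$ therefore takes the clean form
\begin{equation*}
-\Theta(s)=t\bigl(\lambda-\alpha\langle e_3(s),p\rangle\bigr).
\end{equation*}

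The key step is to read this as a polynomial identity in $t$ with coefficients depending only on $s$: the left-hand side is independent of $t$, while the right-hand side is linear in $t$. Matching coefficients of $t^0$ and $t^1$ forces
\begin{equation*}
\Theta(s)\equiv 0 \qquad\text{and}\qquad \langle e_3(s),p\rangle\equiv \lambda/\alpha.
\end{equation*}
From $\Theta\equiv 0$ and the second identity in \eqref{bbb}, one gets $e_3'=-\Theta\beta'=0$, so $e_3$ is a constant unit vector; and from the first identity in \eqref{bbb}, $\beta''=-\beta$, so $\beta$ is a great circle in the plane $e_3^\perp$. Consequently $X(s,t)=p+t\beta(s)$ lies in the affine plane $p+e_3^\perp$, which shows $M$ is contained in a plane, as claimed.

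There is no real obstacle here: the whole argument is driven by the fact that for a conical surface every geometric quantity in \eqref{ll} separates into a pure function of $s$ times a known power of $t$, so \eqref{ll} becomes a polynomial identity in $t$. The only mild subtlety is being careful with the sign of $t$ when computing $\sqrt{W}=|t|$ and $N$, but since the identity is homogeneous the argument is symmetric in the two half-rulings.
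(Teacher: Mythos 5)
Your proof is correct and follows essentially the same route as the paper: substitute the conical parametrization into the self-similar equation, view it as a polynomial identity in $t$ with $s$-dependent coefficients, and deduce $\Theta\equiv 0$ (hence $\beta$ a great circle, $e_3$ constant) together with $\alpha\langle e_3,p\rangle=\lambda$, so the surface lies in a plane. Your explicit handling of $\sqrt{W}=|t|$ and the spelled-out great-circle argument via \eqref{bbb} are only minor presentational refinements of the paper's argument.
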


\begin{proof}
Suppose that $M$ is a conical surface parametrized by    $X(s,t)=p_0+t\beta(s)$,   where $p_0\in\r^3$ is a fixed point .  Then
  $F=0$, $W=t^2$ and  equation \eqref{sh3} is 
$$t^2(\beta',\beta,\beta'')-\alpha t^3(p_0,\beta',\beta)-\lambda t^3=0.$$
This is a polynomial equation in the variable $t$, where the  coefficients depend only on the variable $s$. Thus we deduce $(\beta,\beta',\beta'')=0$ and $\alpha(p_0,\beta,\beta')-\lambda=0$. Since $\beta$ is a curve in the unit sphere $\s^2$ parametrized by arc-length,   it is not difficult to conclude from $(\beta,\beta',\beta'')=0$ that $\beta$ is a great circle of $\s^2$. This proves that the surface is a plane containing the point $p_0$, proving the result.
\end{proof}

 From now, we assume that the ruled surface is not conical, that is, $\gamma$ is not a constant curve.  The next step of the proof of Theorem \ref{t1} is to choose a suitable parametrization of the ruled surface.  In a ruled surface, it is  always possible to take a (not unique)   special parametrization that consists in taking for $\gamma$ a curve orthogonal to the rulings, that is, $\langle\gamma(s),\beta(s)\rangle=0$ for all $s\in I$.

As we pointed out in the introduction,   we can assume that $\gamma(s)$ is a curve perpendicular to the rulings of the surface, that is,  $\langle\gamma'(s),\beta(s)\rangle=0$ for all $s\in I$. 

 The derivatives of $X$ with respect to $s$ and $t$ are
$$X_s=\gamma'(s)+t\beta'(s),\quad X_t=\beta(s)$$
$$X_{ss}=\gamma''(s)+t\beta''(s),\quad X_{st}=\beta'(s),\quad X_{tt}=0.$$
Then $F=\langle X_s,X_t\rangle=\langle \gamma',\beta\rangle=0$, $G=1$  and   
\begin{equation}\label{ee}
E=\langle X_s,X_s\rangle=|\gamma'|^2+2t\langle\gamma',\beta'\rangle+t^2.
\end{equation}
The unit normal vector field  is  
$$N=\frac{\gamma'\times\beta-t e_3}{\sqrt{E}}.$$
Equation \eqref{sh3} is now
\begin{equation}\label{e-rrr}
\mathcal{L}=
\alpha E\left((\gamma',\beta,\gamma)-t\langle e_3,\gamma\rangle\right)+\lambda E^{3/2},
\end{equation}
where
$$\mathcal{L}=-(\beta,\beta',\beta'')t^2+t\left((\beta',\beta,\gamma'')+(\gamma',\beta,\beta'')\right)+(\gamma',\beta,\gamma'').$$
Using \eqref{bbb}, we write this equation as
$$\mathcal{L}=-\Theta\, t^2-t\left(\langle e_3,\gamma''\rangle+\Theta\langle \gamma',\beta'\rangle\right)+(\gamma',\beta,\gamma'').$$

We distinguish the cases $\lambda=0$ and $\lambda\not=0$.
\begin{enumerate}
\item Case $\lambda=0$. We see \eqref{e-rrr} as a polynomial on the variable $t$, which  is  of degree $3$ by the expression of $E$ in \eqref{ee}. From the  coefficient for $t^3$, we have 
$$\alpha \langle e_3,\gamma\rangle=0.$$
Since $\alpha\not=0$, we deduce $ \langle e_3(s),\gamma(s)\rangle=0$ for all $s\in I$. Then $\gamma(s)$ belongs the plane determined by $\beta(s)$ and $\beta'(s)$. Let
\begin{equation}\label{bb}
\gamma(s)=u(s)\beta(s)+v(s)\beta'(s)
\end{equation}
for some smooth functions $u=u(s)$ and $v=v(s)$.  Now  equation \eqref{e-rrr} is $\mathcal{L}=
\alpha E (\gamma',\beta,\gamma)$.  Matching the   coefficients on $t$ of degree $2$, $1$ and $0$, we obtain, respectively, 
\begin{eqnarray*}
\Theta&=&-\alpha (\gamma',\beta,\gamma)\\
\langle e_3,\gamma''\rangle+\Theta\langle \gamma',\beta'\rangle&=&-2\alpha\langle\gamma',\beta'\rangle(\gamma',\beta,\gamma)\\
(\gamma',\beta,\gamma'')&=&\alpha|\gamma'|^2(\gamma',\beta,\gamma).
\end{eqnarray*}
Using  the basis $\mathcal{B}$ and equation \eqref{bb},  we calculate the velocity of $\gamma(s)$, obtaining 
\begin{equation}\label{b4}
\gamma'=(u'-v)\beta+(u+v')\beta'+v\Theta\, e_3.
\end{equation}
Since $\langle\gamma',\beta\rangle=0$, we have $u'-v=0$.  From this expression of $\gamma'$ in combination with   \eqref{bbb}, we obtain $(\gamma,\gamma',\beta)=v^2\Theta$. Then the three above identities become
\begin{eqnarray}
\Theta&=&-\alpha v^2\Theta\label{s1}\\
\langle e_3,\gamma''\rangle+\Theta\langle \gamma',\beta'\rangle&=&-2\alpha v^2 \langle\gamma',\beta'\rangle\Theta\label{s2}\\
(\gamma',\beta,\gamma'')&=&\alpha v^2|\gamma'|^2 \Theta.\label{s3}
\end{eqnarray}
From \eqref{s1}, 
$$(1+\alpha v^2)\Theta=0.$$

We discuss two cases.
\begin{enumerate}
\item Case $\Theta=0$. As in Proposition \ref{pr1}, the curve $\beta(s)$ describes a great circle of $\s^2$. In particular, $e_3=\beta\times\beta'$ is a unit constant vector  orthogonal to the plane $P$ containing $\beta$. Moreover, from \eqref{bb}, $\langle\gamma(s),e_3\rangle=0$ for all $s\in I$.   Thus 
$$\langle X(s,t),e_3\rangle=\langle\gamma(s)+t\beta(s),e_3\rangle=\langle\gamma(s),e_3\rangle=0.$$
This proves that the surface is part of the plane  $P$.   
\item  Case $\Theta\not=0$. Then 
\begin{equation}\label{av}
1+\alpha v^2=0.
\end{equation}
 In particular, $v$ is a non-zero constant function and $v'=0$. Moreover,  from \eqref{bbb} and \eqref{b4}, 
\begin{equation}\label{gg}
\begin{split}
\gamma'&=u\beta'+v\Theta e_3,\\
\gamma''&=-u\beta+v(1-\Theta^2)\beta'+(u\Theta+v\Theta')e_3.
\end{split}
\end{equation}
From these expressions, we   compute the terms of the identity \eqref{s2}, obtaining
$$2u\Theta+v\Theta'=-2\alpha uv^2\Theta.$$
Due to \eqref{av}, the above equation is simply $v\Theta'=0$. Since $v\not=0$ from \eqref{av}, we have shown that  $\Theta$ is a constant function. 

We now compute the terms of the identity \eqref{s3}. Because $\Theta$ is constant, and taking into account \eqref{av} and \eqref{gg}, we find
$$(\gamma',\beta,\gamma'')=(v^2-u^2)\Theta-v^2\Theta^3,$$
$$\alpha v^2|\gamma'|^2\Theta=-(u^2+v^2\Theta^2)\Theta.$$
Thus  \eqref{s3} reduces $v^2\Theta=0$,   obtaining a contradiction.

\end{enumerate}

\item Case   $\lambda\not=0$. Squaring the equation \eqref{e-rrr} 
\begin{equation}\label{b00}
\Big((\mathcal{L}-\alpha E((\gamma',\beta,\gamma)-t\langle e_3, \gamma\rangle)\Big)^2-\lambda^2 E^3=0.
\end{equation}
Set $\Gamma=|\gamma'|^2$. Equation \eqref{b00}  is polynomial equation on $t$ of degree $6$ whose coefficients are functions on the variable $s$,  hence all them must vanish. The coefficients for $t^6$ and $t^0$ are, respectively
\begin{eqnarray} 
&&\lambda^2 -\alpha^2\langle e_3,\gamma\rangle^2=0,\label{b11}\\
&&\lambda^2\Gamma^3-\left(\alpha\Gamma(\gamma',\beta,\gamma)-(\gamma',\beta,\gamma'')\right)^2=0.\label{b12}
\end{eqnarray}
Then   $\lambda=\pm\alpha\langle e_3,\gamma\rangle$  and $\lambda\Gamma^{3/2}=\pm (\alpha\Gamma(\gamma',\beta,\gamma)-(\gamma',\beta,\gamma''))$. Without loss of generality, we take the sign $+$, namely,
\begin{equation}\label{lb}
\lambda= \alpha\langle e_3,\gamma\rangle,\quad  \lambda\Gamma^{3/2}=  \alpha\Gamma(\gamma',\beta,\gamma)-(\gamma',\beta,\gamma''),
 \end{equation}
 and the reasoning in the other cases of sign is analogous.

 We now compute  the coefficient of $t^5$ of \eqref{b00}. We use \eqref{b11} and after some simplifications, we find
$$\alpha\langle e_3,\gamma\rangle\Big(\Theta+\alpha \langle e_3,\gamma\rangle\langle\gamma',\beta'\rangle+ \alpha (\gamma',\beta,\gamma)\Big)=0.$$
We use that $\lambda\not=0$. Because $\langle e_3,\gamma\rangle\not=0$ by \eqref{b11},  
$$\Theta+\alpha \langle e_3,\gamma\rangle\langle\gamma',\beta'\rangle+ \alpha (\gamma',\beta,\gamma)=0.$$
From here, we obtain an expression for $\Theta$, 
\begin{equation}\label{b13}
\Theta= -\alpha \langle e_3,\gamma\rangle\langle\gamma',\beta'\rangle-\alpha (\gamma',\beta,\gamma).
\end{equation}
Similarly, and for the coefficient for $t$ of \eqref{b00} and using \eqref{bbb} and \eqref{b12},  
\begin{eqnarray*}
&&\alpha\Gamma\langle e_3,\gamma\rangle-\langle e_3,\gamma''\rangle+3\alpha\Gamma^{1/2}\langle e_3,\gamma\rangle\langle\gamma',\beta'\rangle-2\alpha\langle\gamma',\beta'\rangle(\gamma',\beta,\gamma)+\Theta\langle\gamma',e_3\rangle=0,
\end{eqnarray*}
hence
$$
(\gamma',\beta,\gamma)=\frac{\alpha\Gamma\langle e_3,\gamma\rangle-\langle e_3,\gamma''\rangle+3\Gamma^{1/2}\langle e_3,\gamma\rangle\langle\gamma',\beta'\rangle+\Theta\langle\gamma',e_3\rangle}{2\alpha\langle\gamma',\beta'\rangle}.
$$
We now take the coefficient of $t^4$ in \eqref{b00}. This is a long expression that we simplify by replacing   the value   $(\gamma',\beta,\gamma)$ from the above equation, together  \eqref{b11} and \eqref{b13}. By vanishing this coefficient, we arrive to  
$$-3\alpha\langle e_3,\gamma\rangle^2\left(\Gamma^{1/2}+\langle\gamma',\beta'\rangle\right)^2=0.$$
Thus 
\begin{equation}\label{final}
\Gamma=\langle\gamma',\beta'\rangle^2.
\end{equation}
On the other hand,  since $ \langle\gamma',\beta \rangle=0$ and   from the basis, we have $\mathcal{B}$, $\gamma'=\langle\gamma',\beta'\rangle\beta'+\langle\gamma',e_3\rangle e_3$. Then  
$$\Gamma=|\gamma'|^2=\langle\gamma',\beta'\rangle^2+\langle\gamma',e_3\rangle^2.$$
 Combining with \eqref{final}, we deduce 
$\langle\gamma',e_3\rangle=0$, so  $\gamma'=\langle\gamma',\beta'\rangle\beta'$. Using the basis $\mathcal{B}$ again,  it is immediate from \eqref{bb} that  
$$(\gamma',\beta,\gamma)=-\langle\gamma,e_3\rangle\langle\gamma',\beta'\rangle.$$
Replacing in  \eqref{b13}, we deduce 
$\Theta=0$. This proves that $\beta(s)$ is a great circle of $\s^2$. Thus $e_3=\beta\times\beta'$ is a unit constant vector orthogonal to the plane $P$ containing $\beta$. From \eqref{lb}, it follows that 
$$\langle e_3,\gamma(s) \rangle=\frac{\lambda}{\alpha}$$
for all $s\in I$.   Finally, from the parametrization \eqref{ru}, we deduce
$$\langle X(s,t),e_3\rangle=\langle\gamma(s),e_3\rangle+t\langle\beta(s),e_3\rangle=\frac{\lambda}{\alpha},$$
proving that $X(s,t)$ is contained in a plane parallel to $P$. 
 
\end{enumerate}
After the discussion of the cases $\lambda=0$ and $\lambda\not=0$, and from Proposition \ref{pr1}, we conclude that the surface is a plane of $\r^3$. This completes the proof of Theorem \ref{t1}.

\section{Classification of translation surfaces}\label{sec3}

In this section we   study the solutions of \eqref{ll} (or equivalently of \eqref{sh4}) by the method of separation of variables. Let $M$ be the   graph of a function $u(x,y)=f(x)+g(y)$ where  $f:I\subset\r\rightarrow\r$ and $g:J\subset\r\rightarrow\r$ are smooth functions. If we parametrize by $X(x,y)=(x,y,f(x)+g(y))$, the set of points of the surface $M$ is  the sum of two planar curves, namely, 
\begin{equation}\label{pt}
X(x,y)=(x,0,f(x))+(0,y,g(y)).
\end{equation}
In the literature, surfaces of type $z=f(x)+g(y)$  are called    {\it translation surfaces} and they form part of a large family of ``surfaces d\'efinies pour des properti\'es cin\'ematiques'' following the terminology of Darboux in \cite{da}.   In case that one of the functions $f$ or $g$  is linear,  the surface is a ruled surface. Indeed, if for example, $g(y)=ay +b$ where $a,b\in\r$, then $\eta(x)=(x,0,f(x)+b)$ is the directrix of the surface and its parametrization is $X(x,y)=\eta(x)+y(0,1,a)$. This means that  $M$ is a ruled surface where all rulings are parallel to the fixed direction $(0,1,a)$, in particular, the surface is cylindrical.

 The proof of Theorem \ref{t2} is by contradiction. We assume   that both functions $f$ and $g$ are not linear. In particular, $f'f''\not=0$ and $g'g''\not=0$ in some subintervals   $\tilde{I}\subset I$ and $\tilde{J}\subset J$ respectively. Thus $f'f''g'g''\not=0$ in $\tilde{I}\times\tilde{J}$. 

We use  the     parametrization \eqref{pt} to calculate the Gauss map $N$ of $M$,  
$$N=\frac{X_x\times X_y}{|X_x\times X_y|}=\frac{(-f',-g',1)}{\sqrt{1+f'^2+g'^2}}.$$
Here, we denote by prime $'$ the derivative of $f$ or $g$ with respect to its variables. The mean curvature $H$ of $M$  is
$$H=\frac{(1+g'^2)f''+(1+f'^2)g''}{(1+f'^2+g'^2)^{3/2}}.$$ 
Then the self-similar solution equation \eqref{ll} is 
$$\frac{(1+g'^2)f''+(1+f'^2)g''}{(1+f'^2+g'^2)^{3/2}}=\alpha\frac{-xf'-y g'+f+g}{\sqrt{1+f'^2+g'^2}}+\lambda.$$
The determinant of the first fundamental for is $W=1+f'^2+g'^2$. Then the above equation can be expressed as
\begin{equation}\label{eq2}
(1+g'^2)f''+(1+f'^2)g''=\alpha(-xf'-y g'+f+g)\, W +\lambda\, W^{3/2}.
\end{equation}
The differentiation of (\ref{eq2}) with respect to the variable $x$ gives 
$$ \left(1+g'^2\right)f'''+2f'f''g''=-\alpha x f''\, W+2\alpha f'f''(-xf'-yg'+f+g)+3\lambda\, f'f''  W^{1/2}.$$
 A   differentiation of this equation with respect to the variable $y$ leads to
 $$2g'g''f'''+2f'f''g'''=-2\alpha x g'g''f''-2\alpha y f'f''g''+3\lambda f'f''g'g'' W^{-1/2},$$
 or equivalently, 
 \begin{equation}\label{eq4}
 2(f'''+\alpha x f'')g'g''+2(g'''+\alpha y g'')f'f''=3\lambda\, f'f''g'g''W^{-1/2}.
\end{equation}
 We separate the discussion in two cases according the constant $\lambda$.
\begin{enumerate}
\item Case $\lambda=0$.   We divide (\ref{eq4}) by $f'f''g'g''$, obtaining 
$$\frac{f'''+\alpha xf''}{f'f''}=-\frac{g'''+\alpha yg''}{g'g''}.$$
Since the left-hand side of this equation depends on the variable $x$, and the right-hand one on $y$, it follows that there is a constant $a\in\r$ such that
\begin{equation}\label{eq5}
\frac{f'''}{f'f''}+\alpha\frac{x}{f'}=-\frac{g'''}{g'g''}-\alpha\frac{y}{g'}=2a.
\end{equation}
From  a first integration of both equations, we find $m,n\in\r$ such that
\begin{equation}
\begin{split}\label{f1}
&f''+\alpha x f'-\alpha f=a f'^2+m,\\
&g''+\alpha y g'-\alpha g'=-a g'^2+n.
\end{split}
\end{equation}
By substituting into (\ref{eq2}), we obtain
$$ (n+a-\alpha f)f'^2+\alpha x f'^3=(a-m+\alpha g)g'^2+\alpha y g'^3-m-n.$$
Again, we deduce the existence of a constant $b\in\r$ such that 
\begin{equation}
\begin{split}\label{f2}
& (n+a-\alpha f)f'^2+\alpha x f'^3=b,\\
&(a-m+\alpha g)g'^2+\alpha y g'^3-m-n=b.
\end{split}
\end{equation}
We now give an argument with the function $f$ (it may done similarly for $g$). The function $f$ satisfies the first equation in \eqref{f1} and \eqref{f2}. 
Differentiating the first equation of \eqref{f2} with respect to $x$, it follows that
$$\left(2(n+a-\alpha f)+3\alpha x f'\right)f'f''=0.$$
Taking into account that $f'f''\not=0$, we deduce
$$2(n+a-\alpha f)+3\alpha x f'=0.$$
Instead to solve this equation, and in order to avoid the constants $a$ and $n$, we differentiate again this equation with respect to $x$. Simplifying,  we arrive to
$$f''=-\frac{1}{3x}f'.$$
The solution of this equation is $f(x)=cx^{2/3}+k$ where $c,k\in\r$.  Since $f$ is a not a constant function, then the constant $c$ is not $0$.   Once we have the expression of $f(x)$, we come back to   the first equation of \eqref{f1} and  we obtain
$$\frac{4 a c^2}{9} x^{-2/3}+\frac{1}{3} \alpha  c x^{2/3}+\frac{4 c}{9} x^{-4/3}+\alpha  k+m=0$$
for all $x\in I$. This equation is a polynomial equation on the function $x^{2/3}$. Then all coefficients vanish, in particular,   $c=0$, obtaining a contradiction.
\item Case $\lambda\not=0$. We divide \eqref{eq4} by $f'f''g'g''$, obtaining
$$\frac{2(f'''+\alpha x f'')}{f'f''}+\frac{2(g'''+\alpha y g'')}{g'g''}=3\lambda\frac{1}{\sqrt{1+f'^2+g'^2}}.$$
In view of the left-hand side of this equation is the sum of a function of $x$ with a function depending on $y$, if we differentiate with respect to $x$, and next with respect to $y$, the left-hand side vanishes. On the other hand, in the right-hand side, the same differentiations   give 
$$9\lambda\frac{f'f''g'g''}{(1+f'^2+g'^2)^{5/2}}=0.$$
This is a contradiction because $\lambda\not=0$ and  $f'f''g'g''\not=0$. This finishes the proof of Theorem \ref{t2}.

 \end{enumerate}
 
As a  final remark, we point out  that   the parametrization \eqref{pt} does not coincide with \eqref{kk} because for the translation surface \eqref{pt}  the rulings are not necessarily orthogonal to the plane containing the directrix $\eta(x)=(x,0,f(x)+b)$ (except if $a=0$), such it occurs in the parametrization \eqref{kk}.  If $a=0$ (and $b=0$),  equation \eqref{cee} is the equation \eqref{kk} for curves  $y=f(x)$. However, the cylindrical solutions given by Theorem \ref{t2} coincide, up to a linear isometry, with the ones given in Theorem \ref{t1}.

 \small

\end{document}